\documentclass[12pt,reqno]{amsart}
\usepackage{latexsym,amssymb,amsmath,amsfonts,amsthm,url}
\usepackage[utf8]{inputenc}
\usepackage[english]{babel}
\usepackage{enumitem}

\usepackage{diagbox}
\usepackage{bbm}
\usepackage[dvipsnames]{xcolor}

\newtheorem{theorem}{Theorem}

\newtheorem{lemma}{Lemma}
\newtheorem{remark}{Remark}

\theoremstyle{definition}

\newcommand{\abs}[1]{\left\vert#1\right\vert}

\renewcommand\hat{\widehat}

\def\beq{ \begin{equation} }
\def\eeq{ \end{equation} }

\def\square{\vcenter{\vbox{\hrule height .4pt
			\hbox{\vrule width .4pt height 5pt \kern 5pt
				\vrule width .4pt} \hrule height .4pt}}}

\def\sqz{\kern -0.2em}

\def\E{\mathbb{E}}

\def\hat{\widehat}

\def\1{\mathbbm{1}}
\def\P{\mathbb{P}}

\def\Px{\P ^{(x)}}

\def\ER{Erd\H{o}s-R\'enyi}
\usepackage{graphicx}
\graphicspath{%
	{converted_graphics/}
	{/}
}
\title{A note on the cooperative two-type SIR processes on Galton-Watson trees}

\author[Ma]{Ruibo Ma}

\author[Liu]{Tai Heng Liu}

\author[Baghdadi]{Baghdadi Othmane}

\author[Yao]{Dong Yao}

\address{School of Mathematics and Statistics, Beijing Jiaotong University.}
\email{9908@bjtu.edu.cn}

\address{Department of Mathematics, University of Hawai`i at M\={a}noa.}
\email{taihengliu2@gmail.com}

\address{LaMAO, Faculty of Sciences, Mohammed First University, Oujda, Morocco}
\email{othmane.baghdadi.d23@ump.ac.ma}

\address{(Corresponding author) School of Mathematics and Statistics/RIMS, Jiangsu Normal University.}
	\email{dongyao@jsnu.edu.cn}

\begin{document} 
\begin{abstract}
In the standard SIR model on a graph, infected vertices infect their neighbors at rate $\alpha$ and recover at rate $\mu$. We  
 consider a two-type SIR process where each individual in the graph can be infected with two types of diseases, $A$ and $B$. Moreover, the two diseases interact in a cooperative way so that an individual that has been infected with one type of disease can acquire the other at a higher rate. We prove that if the underlying graph is a Galton-Watson tree and initially the root is infected with both $A$ and $B$, while all others are susceptible, then the two-type SIR model has the same critical value for the survival probability as the classic single-type model.  
\end{abstract}

    \maketitle
\smallskip
\textit{Keywords: }SIR model; Galton-Watson trees; cooperative interactions

\textit{Classification: }60J27, 92D30

	\section{Introduction}


SIR models and their variants have been widely used to predict and control the disease spreading process.  In the standard single-type SIR process based on a graph, each node can be in one of three states: S (representing `susceptible'), I (representing `infected') and R (representing `recovered'). Vertices in state I try to infect their neighbors at rate $\alpha$, independently across each edge, and recover (turn into state R) at rate $\mu$. Alternatively, in terms of species evolution, one might imagine that each individual gives birth after a random amount of time distributed as Exp($\alpha$)  and dies after time distributed as Exp($\mu$), where all the exponential random variables are independent. A closely related model is the SIS model, also called the contact process, where a vertex turns into state S instead of R when it recovers.

Researchers have also been fascinated by the behavior of epidemic models that incorporate interactions (competition or cooperation) among multiple species.  Two-type SIS models with competitions
for space on the $d$-dimensional integer lattice $\mathbb{Z}^d$
were investigated in \cite{MounTford2019,Neuhauser1992,Stover2020}.
For the case of equal death rates,  Neuhauser \cite{Neuhauser1992} proved that the species with the smaller birth rate dies out locally, while 
Mountford,  Pantoja, and Valesin \cite{MounTford2019}  proved that the winner takes over a ball whose radius grows linearly over time.
Neuhauser conjectured that for general death rates, the species with the higher birth/death ratio wins the competition, which has been verified by Stover \cite{Stover2020} for certain cases but still remains largely open.
For cooperative interactions, Durrett and Yao \cite{DurrettYao2020} considered a symbiotic contact process on $\mathbb{Z}^d$
where the presence of one species can reduce the death rate of the other type at the same site. It was proved that strong symbiosis can lower the critical value, but the general case (especially weak  symbiosis) remains unclear.

In another direction, Lanchier and Neuhauser
\cite{lanchier2010stochastic,lanchier2006stochastic} studied stochastic models with hosts and symbionts. In these models, each host can be infected with a symbiont, while a few species of hosts compete against each other.  Durrett and Lanchier \cite{durrett2008coexistence} studied another case where only one of two species of hosts can be infected with one type of symbionts. In these papers, conditions for survival and coexistence have been studied, and the main results describe the long-term behavior of the models under certain conditions.
A few years later, Lanchier and Zhang \cite{lanchier2016some} studied the ``stacked contact process'', and Ma \cite{ma2022complete} studied the ``two-level contact process''. In these two models, there are uninfected hosts and infected hosts. The results focused on the phase transition and the limiting distribution of the models.

	Motivated by a series of works by Ghanbarnejad and coauthors
    \cite{cai2015avalanche, chen2013outbreaks,grassberger2016phase,zarei2019exact},
    we consider a \emph{two-type cooperative SIR process} on a network.  Suppose that two diseases, $A$ and $B$, spread simultaneously on the network, each type acting as an SIR process. Vertices in the graph can be infected by both types. Thus, there are nine possible states for each vertex,
    \begin{equation}\label{eq:statespace}
            S\text{, }A \text{, } B \text{, } a \text{, } b \text{, } Ab \text{, } aB \text{, } AB\text{ and }ab.
    \end{equation}
    Here, capital $A$ and $B$ indicate active infection, and  lower case $a$ and $b$ indicate recovery from $A$ and $B$. We assume that the two diseases interact in a cooperative way. 
    When a vertex $x$  is susceptible to both diseases, then it can be infected with $A$ at rate $\alpha_1$. On the other hand, 
    if  $x$ has been infected with $B$ before (regardless of whether it has recovered from $B$ or not), then $x$ acquires  $A$ at a higher rate  $\beta_1 \in [\alpha_1, \infty]$.
    Note that an infinite rate means that an infection transmits immediately.  The constant $C _ 1:= \beta _1 / \alpha _1 \in [1,\infty]$ is called the \emph{cooperativity coefficient} for  $A$. Likewise, a vertex $x$ can be infected by $B$ at a higher rate $\beta _2 \geq \alpha _2$ if it has been infected with $A$ before. The cooperativity coefficient of $B$ is $C _ 2 = \beta _2 / \alpha _2$. Our definition of the two-type SIR process simulates the situation in which the immune system weakens because of a prior infection.
    

Chen et al.\@ \cite{chen2013outbreaks} analyzed a deterministic version of this model with mean-field methods, involving a homogeneously mixed population of infinitely many agents. 
In total, there are nine ordinary differential equations for the evolution of all states. Let $[z](t)$ be the fraction of agents in state $z$ at time $t$.
Under the assumption of equal infection rates for both diseases, and letting the recovery rate be 1, \cite{chen2013outbreaks} reduced the system of nine equations to three:
\begin{equation}    \label{eq:ode}
   \begin{cases}
      s'(t)=-2\alpha s(t)x(t),  \\  
  q'(t)=(\alpha s(t)-C \alpha q(t))x(t), \\
  x'(t)=(\alpha s(t)+C \alpha q(t))x(t)-x(t). 
    \end{cases}
\end{equation}
In these equations, $s(t)=[S](t)$ is the fraction of susceptible agents, 
$$x(t)=[A](t)+[AB](t)+[Ab](t)=[B](t)+[AB](t)+[aB](t)$$
is the fraction of agents that are actively infected  by disease $A$  (or $B$) at time $t$, and $$q(t)=[A](t)+[a](t)=[B](t)+[b](t)$$ accounts for the agents which have  an infection history of only one disease. The initial condition is set to $[A](0)=[B](0)=\epsilon/2$, $[S] (0) = 1 - \epsilon$ and the fractions of all other states are $0$. Hence, $s(0)=1-\epsilon$ and $q(0)=x(0)=\epsilon/2$.   

Denote the final epidemic size of the system of equations \eqref{eq:ode} by $$\mathcal{R}(\alpha, C, \epsilon) = 1-\lim_{t\to\infty} s(t).$$
Zarei et al.\@  \cite{zarei2019exact} found
$\mathcal R(\alpha, C, \epsilon)=1-s(0)\exp(-2\alpha T_0)$, where $T_0$ is given by
$$
\inf\left\{t>0: t+s(0)\exp(-2\alpha t)+q(0)\exp(-C\beta t)-\frac{ s(0)}{C-2}(\exp(-C\alpha t)-\exp(-2\alpha t) )=1  \right\}
$$
if $C\neq 2$.
Note that this expression already implies the criticality at $C=2$.
Let
$$
\mathcal{R}_*(\alpha, C) = \lim_{\epsilon\to 0} \mathcal R(\alpha, C, \epsilon)
$$
when the limit exists (otherwise, one may replace the limit with upper limit). Based on numerical experiments and some non-rigorous arguments,  \cite{chen2013outbreaks, zarei2019exact} claimed the following:
\begin{enumerate}[label = (\roman*)]
	\item If $C \leq 2$, then $\mathcal R(\alpha, C, \epsilon)$ is continuous in $\alpha$. Moreover, $\mathcal R_*(\alpha, C)=0$ for $\alpha \leq 1$, and behaves like $\alpha-1$ ($C<2$) or $\sqrt{\alpha-1}$ ($C=2$) for $\alpha$ close to 1.
	\item If $C>2$, then $\mathcal R(\alpha, C, \epsilon)$  is discontinuous in $\alpha$ at some $\alpha_0=\alpha_0(C,\epsilon)$, and
\begin{equation}\label{eq:discont}
    \lim_{\epsilon \to 0} \mathcal R(\alpha_0-, C, \epsilon)=0 \quad \textnormal{ and } \quad 
\lim_{\epsilon \to 0} \mathcal R(\alpha_0+, C, \epsilon)>0. 
\end{equation}
\item For $C>2$,  the critical infection rate $\alpha_0(C, \epsilon)=1-\sqrt{(C-2)\epsilon}+O(\epsilon)$ as $\epsilon\to 0$.
\item The quantity $\mathcal R_*(\alpha, C)\equiv 0$ for $\alpha \leq 1/2$, and  $$\lim_{\epsilon \to 0}\lim_{C\to\infty}R(\alpha, C,\epsilon)  \sim \alpha-\frac{1}{2}\quad  \textnormal{ as } \alpha \to 1/2 \textnormal{.}$$
\end{enumerate}

It is generally expected that
mean-field ODEs may give good approximations for the stochastic particle systems  on the complete graph.
 In real-world networks, nontrivial spatial structures are often present.  Grassberger et al.\@ \cite{grassberger2016phase} conducted simulations for the two-type SIR model on  Erd\H{o}s-R\'enyi graphs and the integer lattice $\mathbb{Z}^d$. They argued that 
 Erd\H{o}s-R\'enyi graphs have a discontinuous 
 phase transition for the fraction of eventually infected vertices when the cooperativity is sufficiently strong,  as predicted by the mean-field ODEs. For the case of $\mathbb{Z}^d$ with $d\geq 2$, simulations in  \cite{grassberger2016phase}  showed that the two-type model shares similar critical and near-critical features  with the  single-type SIR process if and only if $d\leq 3$. 
 \cite{grassberger2016phase}  also considered   the near-critical behavior of the two-type SIR model on trees. In particular,  \cite{grassberger2016phase}  suggested that the critical value (for infinitely many vertices to be infected with both diseases) remains the same as the single-type model for $C=\infty$. See also Remark \ref{rmk:c=infty} below.  
 
 
 In this paper, we consider the two-type SIR process on Galton-Watson trees with cooperative interactions  defined before. The rates are not necessarily the same for $A$ and $B$, and there are six parameters in total, shown in Table \ref{rates table}.

    \begin{table}[h]
       \label{rates table}
\begin{center}
\begin{tabular}{c|c|c|c}
\hline
& original infection rate   & increased infection rate & recovery rate\\
\hline 
disease $A$ &  $\alpha_1$  & $\beta_1\in [\alpha_1,\infty]$ & $\mu_1$\\
\hline 
disease $B$ &  $\alpha_2$  & $\beta_2\in [\alpha_2, \infty]$  & $\mu_2$\\
\hline 
\end{tabular}
\end{center}
\caption{Parameters for the two-type SIR process.}
    \end{table}

Let $GW (p)$ be the Galton-Watson tree with the offspring distribution $p$. Let  $m \in (1,\infty)$ be the finite mean of $p$. (We let $m>1$ so that $GW(p)$ itself is supercritical.) Consider the single-type SIR process on $GW(p)$. We say that it \textit{survives} if for all $t\geq 0$, there is at least one infected vertex at time $t$. The following result is classic. 
   \begin{theorem}\label{thm:single}
   Let the infection rate be $\alpha$ and the recovery rate be $\mu$ for a single-type SIR process on $GW (p)$. At time 0, only the root is infected. The probability of survival is greater than 0 if and only if   $   \alpha/\mu > 1/{(m-1)}   $, where $m$ is the mean of $p$. Thus, if $\mu$ is fixed, then the critical value for $\alpha$ is  $\mu/(m-1)$. 
   \end{theorem}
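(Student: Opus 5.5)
The plan is to reduce the SIR process on $GW(p)$ to a Galton--Watson branching process of infected vertices, and then read off survival from its mean offspring number.

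\textbf{Embedded branching process.} Since the underlying graph is a tree, every non-root vertex $v$ can only be infected by its parent. Once infected, $v$ stays infected for an $\mathrm{Exp}(\mu)$ time $\tau_v$. During that time it infects each of its children independently, namely child $w$ is infected iff the $\mathrm{Exp}(\alpha)$ clock on the edge $(v,w)$ rings before $\tau_v$. The parent of $v$ has already been infected, so it is not relevant for further spread. Given $\tau_v$, each child is infected with probability $1-e^{-\alpha\tau_v}$, independently of the others. Moreover, the subtree structure below distinct vertices, the recovery times, and the edge clocks are all independent. Hence the generations of eventually infected vertices form a Galton--Watson process $(Z_n)$. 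Its offspring law is mixed binomial: the number of children is $\xi\sim p$, and conditionally on $\xi$ and $\tau$, the count is $\mathrm{Bin}(\xi,1-e^{-\alpha\tau})$.

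\textbf{Reducing survival to infinitely many infections.} Survival for all $t$ is equivalent to infinitely many vertices ever being infected. Indeed, if only finitely many vertices are infected, each recovers after a finite time, so the process dies out almost surely. Conversely, if infinitely many are infected, then no finite time has all infections completed. To see this, note that a generation-$n$ infection occurs after a sum of $n$ positive infection delays, and infinitely many generations are nonempty by König's lemma, since $GW(p)$ is locally finite. So if the process died at time $T$, only finitely many vertices could have been infected by time $T$; making this rigorous only needs that, in a locally finite tree, each vertex has finitely many children. Thus survival has positive probability iff $(Z_n)$ is supercritical, apart from the degenerate case of offspring number a.s.\ equal to $1$, which cannot occur here because the infection probability $1-e^{-\alpha\tau}$ is strictly less than $1$.

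\textbf{Mean offspring.} The mean offspring number is
\[
m\,\E\big[1-e^{-\alpha\tau}\big]=m\Big(1-\frac{\mu}{\mu+\alpha}\Big)=\frac{m\alpha}{\alpha+\mu}.
\]
This quantity exceeds $1$ iff $m\alpha>\alpha+\mu$, i.e.\ iff $\alpha/\mu>1/(m-1)$. In the critical case, where the mean equals $1$, the offspring law is nondegenerate, so extinction is almost sure. Standard Galton--Watson theory then gives the claim.

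The main point needing care is the independence structure behind the branching process. The children counts of different vertices must be independent given the past; this holds because each vertex uses its own recovery clock and its own edge clocks to its children, and these are independent of everything above it in the tree. The rest is routine.
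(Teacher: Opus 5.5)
Your proposal is correct and is essentially the paper's proof: the paper also reveals each vertex's offspring only once it is infected. It observes that the infected vertices form a Galton--Watson process with mean offspring $m\alpha/(\alpha+\mu)$ and concludes by standard branching-process theory. Your mixed-binomial offspring law and nondegeneracy check at criticality just make this more explicit.

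You also add a step the paper takes for granted, identifying survival for all $t$ with infinitely many infections. The conclusion is right, but the justification is not: positive delays can sum to a finite limit, and local finiteness alone does not exclude infinitely many infections before a finite time. Use $m<\infty$ instead. The expected number of generation-$n$ vertices infected by time $T$ is at most $m^n$ times the probability that a sum of $n$ independent $\mathrm{Exp}(\alpha)$ variables is at most $T$, hence at most $(m\alpha T)^n/n!$. This is summable in $n$, so almost surely only finitely many vertices are infected before any fixed $T$.
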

Given Theorem \ref{thm:single}, it is natural to probe whether the critical value gets smaller in the cooperative SIR process. We say that the two-type SIR process survives if for all $t\geq 0$, there is at least one infected vertex (with $A$ or $B$) at time $t$. According to the following Theorem \ref{general}, the answer is no.

\begin{theorem}
    \label{general}
Consider the two-type SIR process on $GW (p)$ with parameters $\alpha_i$, $\beta_i$, $\mu_i$, $i=1,2$. At time $0$, the root is infected with both disease $A$ and disease $B$. All other vertices are susceptible to both $A$ and $B$. Let $m$ be the mean of $p$. If
    \begin{equation}\label{eq:assump}
      \max\left\{\frac{\alpha_1}{\mu_1},\frac{\alpha_2}{\mu_2}\right\}\leq \frac{1}{m-1}
      \textnormal {,}
    \end{equation}
then the probability of survival is $0$.
\end{theorem}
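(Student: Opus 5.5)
The plan is to show that on a tree, the cooperative boost can essentially never be used, so each disease is dominated by its single-type SIR process. Survival would then have probability zero by Theorem \ref{thm:single}.

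First I will look at how a vertex can acquire a disease. On a tree, a non-root vertex $x$ with parent $y$ can only be infected through the edge $(y,x)$ before $x$ itself carries anything. Children of $x$ cannot be infected until $x$ is infected. So the first infection of $x$, say by $A$, must come from $y$. Consider now the second disease $B$ arriving at $x$, at the elevated rate $\beta_2$. It can come either from $y$ or from a child $z$ of $x$. If it comes from $z$, then $z$ must carry $B$, and $z$'s infection history traces back through $x$. The key observation is that any infection of $x$ by $B$ along the path from the root still needs $B$ to travel down the edge $(y,x)$ first. A child can only obtain $B$ from $x$, or from its own descendants, who in turn got their infections from $x$. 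Hence $B$ reaching $x$ from below would require $x$ to already have $B$. I will make this precise by induction on generation: the set of vertices ever infected with $B$ forms a subtree containing the root, and each non-root vertex in it received $B$ from its parent.

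Next I will construct the coupling. For each edge $e=(y,x)$, with $y$ the parent, and each disease $i$, I will use the standard graphical representation: a recovery clock $\mathrm{Exp}(\mu_i)$ at each vertex and Poisson infection clocks on the edges. The event that disease $i$ ever crosses $e$ from $y$ to $x$ requires that $y$ has disease $i$ and that the first transmission attempt occurs before $y$ recovers from $i$. The trouble is that the rate is $\alpha_i$ or $\beta_i$ depending on whether $x$ already carries the other disease. That is where the cooperativity could help. A cleaner route is to avoid time coupling altogether and compare the \emph{trees of infection} directly. Each disease-$i$ infection tree is contained in a Galton–Watson-type branching structure. Each infected vertex $y$ passes $i$ to each child independently, but the transmission probability can be as large as $\beta_i/(\beta_i+\mu_i)$ (or $1$ when $\beta_i=\infty$). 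So pure domination fails. I therefore need to use the fact that, for the boosted rate to apply at child $x$, $x$ must already carry the other disease $j$, and that disease also came from $y$.

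The main obstacle is exactly this boosted step. My idea is to bound the expected number of descendants in generation $n$ infected with at least one disease. Define $Z_n$ as the number of generation-$n$ vertices that ever get infected at all. A vertex $x$ is first infected from its parent $y$ by some disease, and that disease must transmit at a base rate $\alpha_i$, since $x$ is susceptible to both at that moment. So $x$ being ever infected requires that, for some $i$, $y$ transmitted $i$ to $x$ at rate $\alpha_i$ during $y$'s $i$-infectious period. Since the base-rate attempt clocks of disease $A$ and disease $B$ are independent of everything at other vertices, the expected number of infected children of $y$ is at most $m\bigl(\tfrac{\alpha_1}{\alpha_1+\mu_1}+\tfrac{\alpha_2}{\alpha_2+\mu_2}\bigr)$. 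This can exceed $1$, so the naive union bound is too weak. Instead I will track the two diseases separately. Let $Z^A_n$ be the number of generation-$n$ vertices that are ever infected with $A$. If cooperativity only ever applies after the partner disease has arrived, I will try to show that $x$ gets $A$ either at rate $\alpha_1$, as the first disease, or at rate $\beta_1$ after $B$ arrived from $y$. I then need a two-type branching process (multi-type GW, vertex types $\{A\},\{B\},\{A,B\}$) whose mean matrix has spectral radius at most $1$ under \eqref{eq:assump}. Computing that mean matrix and showing its Perron root is $\le 1$ is the step I expect to be hardest. If it fails, meaning the boosted double infections make it supercritical, I will revisit whether the statement relies on $\mu$-time constraints, for instance that $y$'s infectious periods for $A$ and $B$ must overlap. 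Once the Perron root is shown to be $\le 1$, critical or subcritical multi-type branching (with the usual irreducibility/non-singularity caveats, handled as in Theorem \ref{thm:single}) gives finite extinction almost surely. That yields probability of survival $0$.
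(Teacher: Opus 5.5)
Your proposal has a genuine gap: it never proves the estimate that carries the theorem. Your opening claim is also false. The boost is used: if $x$ carries both diseases and passes $A$ to a child $y$ while still carrying $B$, then $B$ follows at rate $\beta_2$ (instantly if $\beta_2=\infty$). So disease $B$ is not dominated by a rate-$\alpha_2$ SIR process, as you concede later.

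In your three-type scheme, a vertex of type $\{A\}$ or $\{B\}$ can only produce children of its own type, since on a tree the other disease can only arrive from the parent. Their mean offspring numbers are $m\alpha_i/(\alpha_i+\mu_i)\le 1$. The mean matrix is therefore block triangular, and ``Perron root $\le 1$'' reduces exactly to one claim: a doubly infected vertex has, in expectation, at most one doubly infected child. That is the whole content of the theorem, and you leave it as ``the hardest step'' that might fail.

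Moreover, this is not a finite-type Galton--Watson process. The law of $x$'s doubly infected offspring depends on which disease arrived first and on the lag $\Delta=\tau_B(x)-\tau_A(x)$. During that window $x$ can pass the first disease alone, or recover from it. So there is no fixed mean matrix. What is needed is:
\begin{enumerate}[label=(\roman*)]
\item a bound uniform in $(\tau_A(x),\tau_B(x))$;
\item a supermartingale argument for $X_n$, the number of doubly infected vertices in generation $n$, with a uniform positive one-step extinction probability to handle the critical case;
\item Theorem~\ref{thm:single} applied to the singly infected subtrees below the first generation with $X_n=0$.
\end{enumerate}

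The missing idea sharpens your own observation that a child's first infection always occurs at a base rate. Your union bound counts children infected by \emph{either} disease. It ignores the constraint that makes double infection rare: if $x$ loses a disease while $y$ is still susceptible to it, $y$ can never get that disease.

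Condition on $\tau_A(x)\le\tau_B(x)<\infty$. There are three cases.
\begin{enumerate}[label=(\roman*)]
\item $x$ passes $A$ to $y$ before $\tau_B(x)$. This has probability $\frac{\alpha_1}{\alpha_1+\mu_1}\bigl(1-e^{-(\alpha_1+\mu_1)\Delta}\bigr)\le\frac{1}{m}\bigl(1-e^{-(\alpha_1+\mu_1)\Delta}\bigr)$.
\item $x$ recovers from $A$ before passing it. Then $y$ is never doubly infected, so this case contributes nothing.
\item $x$ neither transmits nor loses $A$ before $\tau_B(x)$, which has probability $e^{-(\alpha_1+\mu_1)\Delta}$. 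After $\tau_B(x)$, four clocks compete: the base-rate infections of $y$ by $A$ and by $B$, and the recoveries of $x$ from $A$ and from $B$. The first of these must be an infection. This has probability $\frac{\alpha_1+\alpha_2}{\alpha_1+\alpha_2+\mu_1+\mu_2}\le\frac{1}{m}$, because $\mu_i\ge(m-1)\alpha_i$.
\end{enumerate}
Summing gives at most $1/m$ per child, hence at most one doubly infected child in expectation, and the $\beta_i$ never enter. This is the paper's Lemma~\ref{t}. Without it, or an equivalent computation, your outline does not prove the theorem.
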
 
\begin{remark}\label{rmk:c=infty}
In the special case of equal infection/recovery rates for both diseases (denoted by $\alpha,\beta=C\alpha$ and $\mu$)  with $C=\infty$,  the conclusion of    Theorem \ref{general}  is consistent with  the prediction made in \cite{grassberger2016phase} regarding infinite tree graphs, where the authors claimed that (for $C=\infty$), 
$$
\mathbb{P}(\mbox{infinitely many vertices are infected with both diseases})>0\, \mbox{ if and only if }\, \frac{\alpha}{\mu}>\frac{1}{m-1}. 
$$ See also the discussions above Table       \ref{rates table}.
\end{remark}
Combining Theorems \ref{thm:single} and \ref{general}, the two-type SIR process survives with a positive probability if and only if
  $$
      \max\left\{\frac{\alpha_1}{\mu_1},\frac{\alpha_2}{\mu_2}\right\}> \frac{1}{m-1}.
  $$
 Since the Galton-Watson tree and its variants arise as local limits of some random graph models such as the configuration model and the {\ER} graph, our result may be useful for studying the two-type SIR processes on these random graphs. We leave further investigations as  future work.   

In the remainder of this paper, we prove
Theorems \ref{thm:single} and  \ref{general}  in Section \ref{sec:single} and Section \ref{sec:two}, respectively, by coupling the Galton-Watson tree with the epidemic process. 
Section \ref{sec:conclusion} summarizes the main findings of this paper and states many open problems as well as some simulation results.

    \section{Single-type SIR on Galton-Watson trees}       \label{sec:single}

    Though Theorem \ref{thm:single} is more or less well-known, here we give a proof for the sake of completeness and also to illustrate the basic idea that will also be used in the proof of Theorem \ref{general}. We couple the Galton-Watson tree with an epidemic process by 
 revealing the number of children of a vertex after it becomes infected. At time 0 only the root $o$ is infected and 
    the only available information about the tree  is the degree of $o$. Since $o$ infects each of its children at rate $\alpha$ and recovers at rate $\mu$, the probability that any given child is infected before $o$ recovers is equal to $\alpha/(\alpha+\mu)$ by standard properties of the exponential distribution. 
    For any vertex $x$,
    we let $N_x$ be the number of children of $x$ that are eventually infected by $x$.   Since the degree $D_o$ of $o$ is distributed according to $p$, we see that the mean of $N_o$ is 
    \begin{equation}\label{eq:edo}
        \E\left( \frac{\alpha}{\alpha + \mu} D_o\right)=\frac{m\alpha}{\alpha+ \mu}.
    \end{equation}
    Denote the distribution of $N_o$ by $q$. 
    Since the number of children of any given vertex has the same distribution $p$, we see that the number of infected children of any infected vertex must have the distribution $q$, and must be independent of its ancestors.
    
    Let $Y_n$ be the number of infected vertices in the $n$-th generation of $GW (p)$. Then
$Y_n$ forms a branching process with the initial value $Y_0=1$ and branching distribution $q$, whose 
mean is equal to $m\alpha/(\alpha+ \mu)$.
    Standard results on the branching process imply that 
    \begin{equation}\label{eq:thm}
    \P(Y_n>0, \forall n)>0 \Leftrightarrow 
    \frac{\alpha}{\alpha+ \mu}m>1 \Leftrightarrow \alpha>\frac{ \mu}{m-1}
    \textnormal  {,}
    \end{equation}
which proves Theorem \ref{thm:single}.

    \section{Two-type SIR on Galton-Watson trees}
        \label{sec:two}

    We present the proof of Theorem \ref{general} in this section.  Lemma \ref{t} below is the key step.

\begin{lemma}\label{t}
    Assume all conditions in  Theorem \ref{general}. For all vertices $x$ in $GW (p)$,
    let $\tau_A(x)$ and $\tau_B(x)$ be the time
    that $x$ gets infected with $A$ and $B$, respectively. (Note that if the vertex $x$ is never infected by a disease, we let the corresponding time be $\infty$.)
    Let $y$ be a child of $x$.
    The conditional probability given
    $ \sigma (\tau_A(x), \tau_B(x))$ that $y$ is infected with both $A$ and $B$ is almost surely less than or equal to $1/m$.
\end{lemma}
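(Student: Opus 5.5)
The plan is to compute the conditional probability explicitly, using only independent exponential clocks, after first reducing to the worst case $\beta_1=\beta_2=\infty$. Write $p_i=\alpha_i/(\alpha_i+\mu_i)$ for $i=1,2$. Assumption \eqref{eq:assump} gives $\alpha_i(m-1)\le \mu_i$, which is equivalent to $p_i\le 1/m$. So it suffices to show that the conditional probability is at most $\max\{p_1,p_2\}$.

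First I fix the structure of the conditioning. Since the graph is a tree, $y$ can only receive $A$ or $B$ from its parent $x$: any descendant of $y$ is infected only after $y$ itself is. Given $\sigma(\tau_A(x),\tau_B(x))$, the relevant randomness consists of $x$'s recovery clocks $\mathrm{Exp}(\mu_1)$ and $\mathrm{Exp}(\mu_2)$ and the transmission clocks on the edge $xy$. These are independent of $\tau_A(x),\tau_B(x)$, which are determined by the ancestors of $x$ and $x$'s other edges. Next I reduce to $\beta_1=\beta_2=\infty$. I realise the transmission of $A$ along $xy$ by a Poisson process of rate $\beta_1$, of which a thinned subprocess of rate $\alpha_1$ is used before $y$ carries $B$, and similarly for $B$. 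Under this coupling, increasing $\beta_i$ can only make the event "$y$ gets both" larger, so the probability is monotone in $\beta_i$ and the case $\beta_i=\infty$ is the worst. If either of $\tau_A(x),\tau_B(x)$ is $\infty$, the probability is $0$. By symmetry, I assume $s:=\tau_A(x)\le \tau_B(x)=s+d$ with $d\in[0,\infty)$.

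With $\beta_i=\infty$, $y$ gets both diseases if and only if its first infection occurs at a time when $x$ is, or will later be, actively infected with the other disease. The second infection is then immediate, either at once or at time $s+d$ when $x$ acquires $B$. I split the computation into two phases.

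\emph{Phase 2 (both diseases active in $x$).} The competing clocks are $\alpha_1$ and $\alpha_2$, either of which gives success, and $\mu_1$ and $\mu_2$. If $\mu_1$ fires first, only $B$ remains active, and success requires $y$ to get $B$ before $x$ recovers from $B$; this has probability $p_2$. Symmetrically, if $\mu_2$ fires first the success probability is $p_1$. Hence the phase-2 success probability is
\[
q=\frac{\alpha_1+\alpha_2+\mu_1p_2+\mu_2p_1}{\alpha_1+\alpha_2+\mu_1+\mu_2}
=\frac{p_1(\alpha_1+\mu_1+\mu_2)+p_2(\alpha_2+\mu_2+\mu_1)}{\alpha_1+\alpha_2+\mu_1+\mu_2},
\]
using $\alpha_i=p_i(\alpha_i+\mu_i)$. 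The numerator is at most $\max\{p_1,p_2\}\cdot(\alpha_1+\alpha_2+2\mu_1+2\mu_2)$, and the weights in the second form are a splitting of $\alpha_1+\alpha_2+\mu_1+\mu_2$ plus extra $\mu$ terms, so $q$ is a weighted average of $p_1$ and $p_2$ with weights summing to at most $1$. In fact the two weights $(\alpha_1+\mu_1+\mu_2)$ and $(\alpha_2+\mu_2+\mu_1)$ overcount the denominator by $\mu_1+\mu_2$, so I need to redo this step carefully. The cleaner route is to compare the $\mu_1$-branch against $p_1$ directly: conditioning first on whether the $A$-related clocks $\alpha_1,\mu_1$ or the $B$-related clocks act first, one should still get $q\le\max\{p_1,p_2\}$.

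\emph{Phase 1 (time interval $[s,s+d)$, only $A$ active).} Let $e$ be the probability that neither the $\alpha_1$ clock nor the $\mu_1$ clock fires during the phase. If $\alpha_1$ fires first, this is a success, because $y$ then gets $B$ at time $s+d$. If $\mu_1$ fires first, this is a failure: $y$ could still get $B$, but never $A$ afterwards. By the competing-exponentials ratio, the chance that $\alpha_1$ fires first is $p_1(1-e)$. The total success probability is therefore $p_1(1-e)+e\,q\le\max\{p_1,p_2\}\le 1/m$.

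The main obstacle is the phase-2 inequality $q\le\max\{p_1,p_2\}$. I will verify it carefully, probably by writing $q$ as an honest convex combination of $1$, $p_1$ and $p_2$ and checking the weights, rather than by the quick rewriting above. A secondary point is to make the monotone coupling in $\beta_i$ rigorous and to check that the conditioning on $\tau_A(x),\tau_B(x)$ leaves $x$'s recovery clocks memoryless and independent of the $xy$ edge clocks.
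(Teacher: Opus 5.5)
\textbf{There is a genuine gap.} It lies in the step you flagged as the main obstacle, and your Phase~2 formula for $q$ is actually wrong, not just unverified. Take the branch where $\mu_1$ fires first. Then $x$ has recovered from $A$ while $y$ is still susceptible to $A$. The only possible source of $A$ for $y$ is $x$: any descendant of $y$ can carry $A$ only if $y$ had it first. So $y$ can never acquire $A$, and this branch contributes $0$, not $p_2$. You used exactly this observation correctly in Phase~1 (``$y$ could still get $B$, but never $A$ afterwards''). It also follows from your own criterion that $y$'s first infection must occur while $x$ is, or will later be, actively infected with the other disease. Symmetrically, the $\mu_2$ branch contributes $0$.

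As written, your $q$ puts positive weight on the value $1$, so no rechecking of weights can give $q\le\max\{p_1,p_2\}$. Concretely, take $\alpha_i=\mu_i=1$ and $m=2$, so the hypothesis holds with equality. Your formula gives $q=3/4>1/2=\max\{p_1,p_2\}=1/m$. At $d=0$ your total bound $p_1(1-e)+eq$ would then exceed $1/m$.

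The fix is to compute the $\beta_i=\infty$ value correctly:
\[
q=\frac{\alpha_1+\alpha_2}{(\alpha_1+\mu_1)+(\alpha_2+\mu_2)} .
\]
This is a mediant of $p_1=\alpha_1/(\alpha_1+\mu_1)$ and $p_2=\alpha_2/(\alpha_2+\mu_2)$, hence $q\le\max\{p_1,p_2\}\le 1/m$. It is precisely the paper's bound \eqref{eq:omega3}: the first of the four competing events must be an infection of $y$.

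With this correction, your decomposition $p_1(1-e)+eq$ is exactly the paper's Case~2 plus Case~1. The paper does not need your monotone coupling in $\beta_i$. It simply discards the requirement of the second infection, bounding $\mathbb{P}(\Omega_2\cap\Omega_4)\le\mathbb{P}(\Omega_2)$ in \eqref{eq:p2} and using only the first-event condition in Phase~2. This amounts to the same worst case with less machinery. Your secondary points, independence of $x$'s recovery clocks and the $xy$ edge clocks from $\tau_A(x),\tau_B(x)$, and monotonicity in $\beta_i$, are fine.
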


    \begin{proof} 
    If $\max\{\tau_A(x), \tau_B(x)\} = \infty$, then the vertex $x$ or its child $y$ cannot receive both infections. Hence, we only consider the   case where $\max\{\tau_A(x), \tau_B(x)\}<   \infty$.  Without loss of generality, we assume that $\tau_A(x)\leq \tau_B(x)$. 
    Define four events:
    \begin{equation*}
        \begin{aligned}
                  \Omega_1 &= \{x \text{ does not infect }y 
            \text{ with disease } A \text{ or recover from } A \text{ before time } \tau_B(x)\} \text {,} \\
            \Omega_2 &= \{ x \text{ infects }y 
            \text{ with disease } A \text{ before time } \tau_B(x) \} \text {,} \\
            \Omega_3 &= \{\text{both diseases are transmitted to } y \text{ after time }\tau_B(x)\}  \text {,}  \\
            \Omega_4 &= \{ x \text{ infects }y 
            \text{ with disease } B \text{ after time } \tau_B(x) \}          \text{.}   
        \end{aligned}
    \end{equation*}
There are two possible ways that  $y$ gets both infections:
\begin{enumerate}   [label = (\roman*)]
    \item
        Case 1: $\Omega_1$ and
        $\Omega_3$ both occur;
    \item
Case 2:        $\Omega_2$ and $\Omega_4$ both occur. 
\end{enumerate}

Let $\P ^{(x)}$ denote the conditional probability on $ \sigma (\tau_A(x), \tau_B(x))$, i.e., for any event $\Omega$, $\P ^{(x)} (\Omega) = \P (\Omega | \sigma (\tau_A(x), \tau_B(x)))$. Let $P_1 = \Px (\Omega_1\cap \Omega_3)$, and $P_2 = \Px (\Omega_2\cap \Omega_4)$. Let $P_0$ be the conditional probability considered in this lemma. Then, $P_0 = P_1 + P_2$ a.s. For Case 1,  
\begin{equation}\label{eq:p1}
    P_1 = \Px(\Omega_1)\P(\Omega_3|\Omega_1) \text{ a.s.}
\end{equation}
Since the minimum of the infection time and recovery time of $A$  has distribution Exp($\alpha_1+\mu_1$), we know that almost surely
\begin{equation}\label{eq:pomega1}
    \Px (\Omega_1)=\exp\left(-(\mu_1+\alpha_1)(\tau_B(x)-\tau_A(x))\right) \text{.}
\end{equation}

Given $\Omega_1$, at time $\tau_B(x)$, $x$ has infections $A$ and $B$, while $y$ is susceptible to both diseases. Four events may happen next: $y$ gets infection $A$, $y$ gets infection $B$, $x$ recovers from $A$ and $x$ recovers from $B$. For $\Omega_3$ to occur, the first of these four events to occur must be one of the first two. Given the rates of the four events, $\alpha_1$, $\alpha_2$, $\mu_1$ and $\mu_2$,          
\begin{equation}\label{eq:omega3}
    \P(\Omega_3|\Omega_1)\leq     \frac{\alpha_1+\alpha_2}{\alpha_1+\mu_1+\alpha_2+\mu_2}
    \leq\frac{\alpha_1+\alpha_2}{\alpha_1+(m-1)\alpha_1+\alpha_2+(m-1)\alpha_2}=\frac{1}{m}.     
\end{equation}

By \eqref{eq:p1}, \eqref{eq:pomega1} and \eqref{eq:omega3}, we get
\begin{equation}\label{eq:p1bd}
    P_1 \leq \frac{1}{m}  \exp\left(-(\mu_1+\alpha_1)(\tau_B(x)-\tau_A(x))\right)   \text{ a.s.}
\end{equation}

For Case 2,  we see that
\begin{equation}\label{eq:p2}
P_2 = \Px (\Omega_2\cap \Omega_4) \leq\Px (\Omega_2)\text{ a.s.,}
\end{equation}
which is further bounded by
\begin{equation}\label{eq:pomega2}
    \begin{aligned}
        \Px (\Omega_2) &= \frac{\alpha_1}{\alpha_1+\mu_1} \left(1-\exp\left(-(\mu_1+\alpha_1)(\tau_B(x)-\tau_A(x))\right)\right)   \\
        &\leq   \frac{1}{m}\left(1-\exp\left(-(\mu_1+\alpha_1)(\tau_B(x)-\tau_A(x))\right)\right)     \text{ a.s.}
    \end{aligned}
\end{equation}

Combining  \eqref{eq:p1bd}, \eqref{eq:p2} and \eqref{eq:pomega2}, we see that 
$$
P_0=P_1+P_2\leq  \frac{1}{m}  \text{ a.s.}
$$
This concludes the proof of Lemma \ref{t}. 
\end{proof}

\begin{proof}[Proof of Theorem \ref{general}]

Let $\mathcal{X}_n$ be the set of vertices in the $n$-th generation of $GW (p)$ that eventually receive both infections, and let $X_n=\abs{\mathcal{X}_n}$, for all $n \geq 0$. Note that the root is the single vertex in the zeroth generation. Let $\mathcal{F}_n$ be the $\sigma$-algebra generated by the number of children of all $x$ up to the $ (n-1)$-th generation and all $\tau_A(x)$, $\tau_B(x)$, for $x$ up to the $n$-th generation.  By Lemma \ref{t},  
\begin{equation*}
    \E(X_{n+1}|\mathcal{F}_n)\leq X_n\textnormal{ a.s., for }n\geq 0.
\end{equation*}
Thus $\{X_n,n\geq 0\}$ is a non-negative integer-valued supermartingale. Hence $X_n$ must converge almost surely to some limit $X$. Moreover, there exists a constant $c>0$ such that for all $k,n\geq 1$,
\begin{equation}\label{xn=0}
\P(X_{n+1}=0|X_n=k)\geq c^k,
\end{equation}
by considering the event where all vertices in $\mathcal{X}_n$ recover before infecting any child. Let $\hat{\Omega}$ be the event
    $$
        \left\{   \lim_{n\to\infty} X_n=0   \right\}=\{X_n=0\textnormal   { for some }n\}.
    $$
By Levy's 0-1 Law \cite[Theorem 4.6.9] {durrett2019probability} and equation \eqref{xn=0},
    \begin{equation*}
        \P ( \mathbf{1}_{\hat{\Omega}}  \mid      \mathcal{F}_n) \to 1=1_{\hat{\Omega}} \text{ a.s.}
    \end{equation*}
Thus, $\P(\hat{\Omega})=1$ and $X=0$ a.s. Now define
\begin{equation*}
    N=\inf\{n: X_n=0\}.
\end{equation*}
All vertices $z$ in the $N$-th generation of $GW (p)$ are either infected by one type of disease, or never infected.    Since the subtree of $GW (p)$ re-rooted at $z$ has the same distribution as $GW (p)$, we can apply Theorem \ref{thm:single} to deduce that the number of infected vertices in that subtree is almost surely finite. Thus, we conclude that the two-type SIR process 
survives with  probability 0.
\end{proof}

\section{Conclusion and future directions}\label{sec:conclusion}
In this paper, we studied the behavior of the two-type cooperative SIR process on a Galton-Watson tree without any symmetry assumption on the infection and recovery rates. We proved that the critical value remains the same as
in the single-type SIR model, regardless of the magnitude of cooperativity coefficients.

We now list a few open questions for future research.

\begin{itemize}
    \item \textbf{Rigorous analysis for the ODE system \eqref{eq:ode}.} It would be interesting to inspect the claims made by physicists \cite{chen2013outbreaks} regarding  properties of final epidemic size $\mathcal R(\alpha, C, \epsilon)$, particularly the discontinuous transition \eqref{eq:discont}. One can also study whether the mean-field equations approximate the true dynamics on the complete graph. 
    \item \textbf{Survival of the weaker species in the asymmetric case.}  If one species is (sub)critical while the other is supercritical, can the weaker species survive with positive probability  when the cooperation is strong?   See  simulations (Figures \ref{sim growth} and \ref{sim_surv}) and discussions   below.
    \item \textbf{Effects of the graph structure on the two-type SIR process.} As mentioned in the introduction, the simulations in \cite{grassberger2016phase} found that the structure of the underlying graph (particularly the existence of loops in the graph) has a major impact on the two-type SIR process thereon.  We  do expect a qualitatively different situation for the survival probability if the Galton-Watson tree is replaced with the integer lattice.
    \item \textbf{Different cooperativity mechanism.}
    One can also consider the cooperativity mechanism as in \cite{DurrettYao2020}, where the recovery rates decrease from $\mu_i$ to $\hat{\mu}_i$ if a node has two infections. By comparison with two independent single-type SIR processes,  it can be shown that  the critical value for survival matches the single-type model if $\hat{\mu}_i$ is sufficiently close to $\mu_i$. However, the case where $\hat{\mu}_i$ is small remains unknown.
    \item \textbf{Cooperative SIS models.} If we consider the SIS dynamics on a Galton-Watson tree, by an oriented percolation argument, the critical value becomes smaller if the cooperation is sufficiently strong. As indicated by \cite{DurrettYao2020}, the general case may be rather challenging. 
\end{itemize} 

We use a computer simulation to investigate the second open problem mentioned in the list. Suppose that the Galton-Watson tree is a binary tree, and consider the parameters shown in Table \ref{tab:simpa}. Now, the original rate of $A$ is supercritical, while the original rate of $B$ is subcritical. We simulate the two-type SIR process for a few choices of $\beta _2$ to predict whether a sufficiently large value of $\beta_2$ can trigger supercritical behavior of the disease $B$.

In our simulation, five out of six parameters are fixed, while $\beta_2$ ranges from $1$ to $1.8$. For each value of $\beta_2$, the simulation is run $200$ times, up to the 18th generation of the binary tree. The results are presented by calculating the average number of nodes infected with $B$ in each generation and by computing the proportion of simulations in which the disease $B$ survives up to each generation.

    \begin{table}[!htbp]
\begin{center}
\begin{tabular}{c|c|c|c}
\hline
& original infection rate   & increased infection rate & recovery rate\\
\hline 
disease $A$ &  5  & 8 & 1    \\
\hline 
disease $B$ &  0.75  & 1, 1.2, 1.4, 1.6, 1.8 & 1    \\
\hline 
\end{tabular}
\end{center}
\caption{Parameters used in the simulation.}
\label{tab:simpa}
    \end{table}

The results are shown in Figures \ref{sim growth} and \ref{sim_surv}. They suggest that a phase transition in $\beta_2$ exists. For $\beta_2 \leq 1.2$, it is likely that the disease $B$ stays subcritical, while for $\beta_2 \geq 1.6$, the disease $B$ appears to become supercritical, infecting more and more nodes in later generations.  A rigorous analysis of this phenomenon  is left as  future work.

\begin{figure}[!htbp]
    \centering
    \includegraphics[width = 0.9 \textwidth]{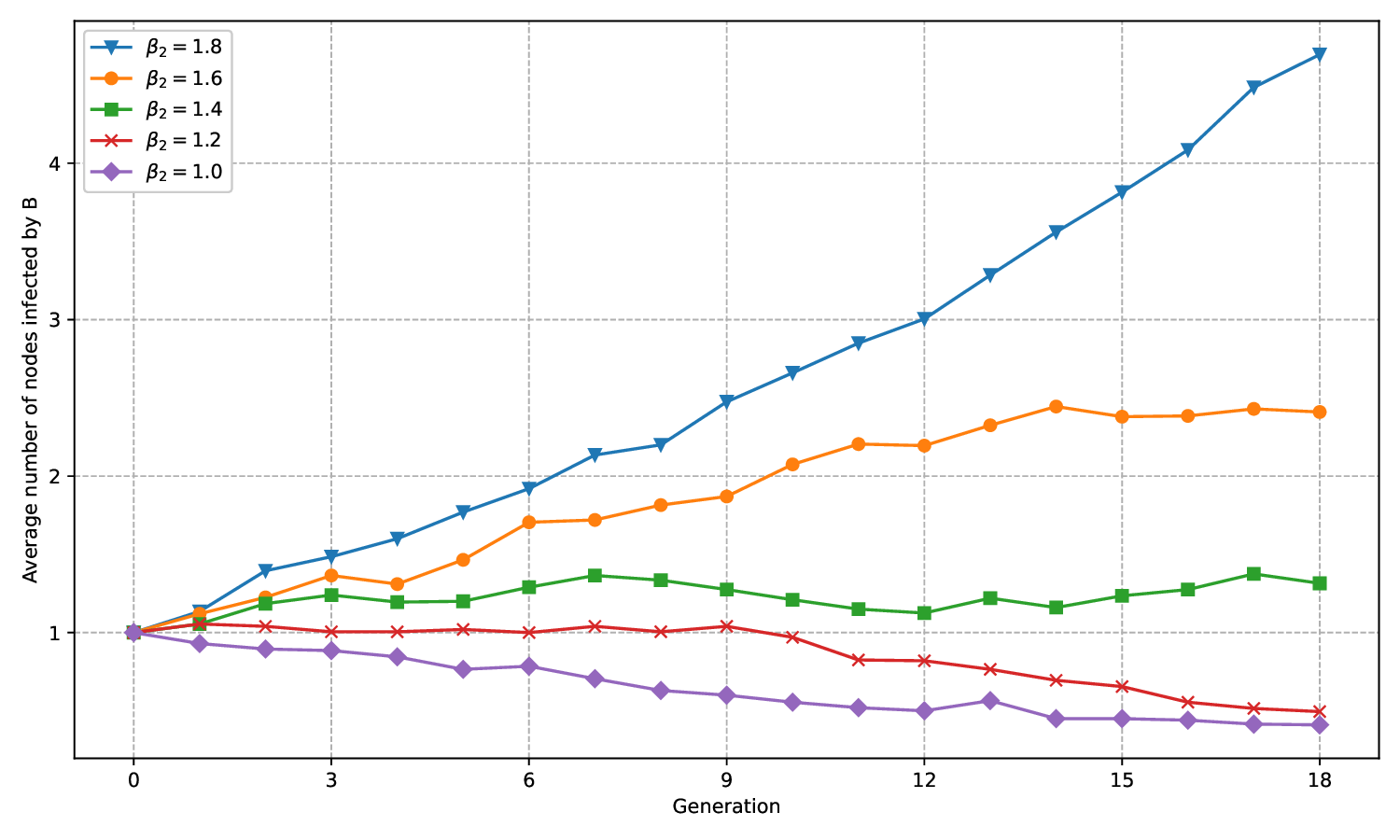}
    \caption{The average number of $B$ infections in each generation. Five parameters are fixed:  $\alpha _1 = 5$, $\alpha_2 = 0.75$, $\beta_1 = 8$, and $\mu _1 = \mu _2 = 1$. The value of $\beta_2$ varies from 1.0 to 1.8.}
    \label{sim growth}
\end{figure}

\begin{figure}[!htbp]
    \centering
    \includegraphics[width = 0.9\textwidth]{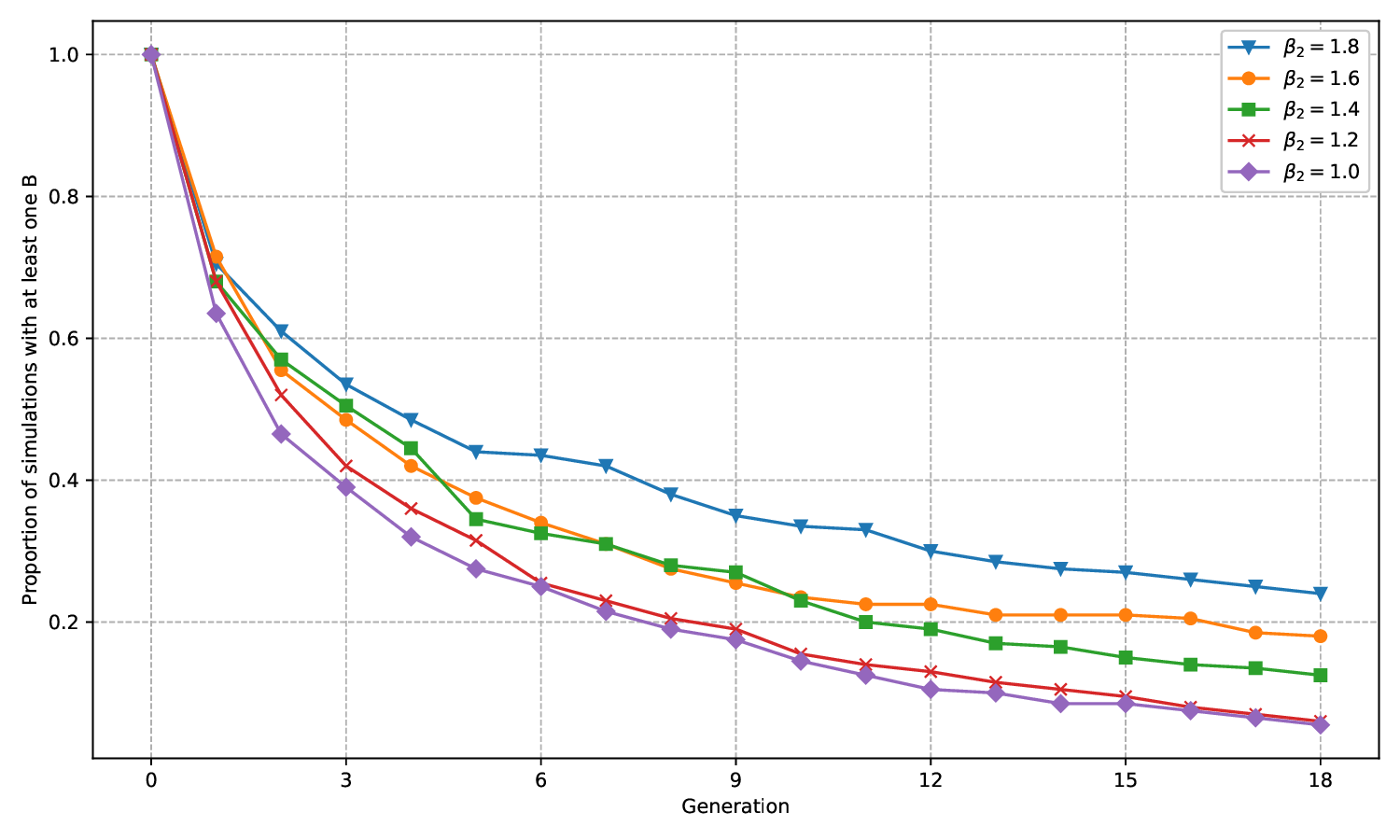}
    \caption{The proportion of simulations with some $B$ infections in each generation. Five parameters are fixed:  $\alpha _1 = 5$, $\alpha_2 = 0.75$, $\beta_1 = 8$, and $\mu _1 = \mu _2 = 1$. The value of $\beta_2$ varies from 1.0 to 1.8.}
    \label{sim_surv}
\end{figure}


\section*{Acknowledgements}

Ruibo Ma is supported by the Talent Fund of Beijing Jiaotong University 2023XKRC025. 
Dong Yao is supported by NSFC No.\@ 12201256 and  Basic Research Program of  Jiangsu grant No.\@ BK20220677. 
Dong Yao would like to thank Prof.\@ Rick Durrett for suggesting this problem to him. The authors also thank Jiawei Li and Lijun Tu for helpful discussions, and the anonymous referee for constructive comments.

	\bibliography{ref}
	\bibliographystyle{plain}

\end{document}